\theoremstyle{plain} 
\newtheorem{thm}{\indent\sc Theorem}[section] 
\newtheorem{lemma}[thm]{\indent\sc Lemma}
\newtheorem{proposition}[thm]{\indent\sc Proposition}
\theoremstyle{definition}
\theoremstyle{remark}
\newtheorem{rem}{\indent\sc Remark}[thm]
\newcommand{\bC}{{\mathbb C}}
\newcommand{\bR}{{\mathbb R}}
\newcommand{\rRe}{{\mathrm {Re}}}
\begin{document}

\title[mean curvature flow in submanifolds]{mean curvature flow in submanifolds} 

\author[H. Nakahara]{Hiroshi Nakahara} 
\subjclass[2010]{53A07 Primary, 53A10, 53C42(secondary).
}

\address{
Department of Mathematics \endgraf 
Tokyo Institute of Technology \endgraf
 2-21-1  O-okayama, Meguro, Tokyo\endgraf
Japan
}
\email{12d00031@math.titech.ac.jp}

\maketitle

\begin{abstract}
We obtain explicit solutions of the mean curvature flow in some submanifolds of the Euclidean space.
We give particularly an explicit solution of the flow of a hypersurface in the Lagrangian self-expander $L$ which is constructed in the article of Joyce, Lee and Tsui and show that it converge to a minimal one.
\end{abstract}
\section{\rm{Introduction}}
\quad Mean curvature flow evolves the submanifolds of the riemannian manifolds in the direction of their mean curvature vectors. The short-time existence and uniqueness of the solution of the mean curvature flow equation was proved. The mean curvature vector of a submanifold in a riemannian manifold is $\sum_j (\nabla_{e_j}e_j)^\perp$ where $\nabla$ is the Levi-Civita connection, $\{e_j \}_j $ is a orthonormal frame of the submanifold's tangent bundle and $\perp$ is the orthogonal projection to the normal bundle of it. 
The mean curvature flow is an important example of the changing. Sometimes the flow stops because of some singular points. Recently the finite time singularity is focused and mean curvature flow has been investigated since it appeared from the study of annealing metals in physics in 1956. Many researchers study it and have found a lot of results. 
The mean curvature flow is the steepest descent flow for the area functional and is described by a parabolic system of partial differential equations. 
The problem area of it is geometry but it implies questions of partial differential equations. 
When $M_0$ is a hypersurface in $\bR^{n+1}$ and  $\{M_t \}_{t\in [0,\epsilon)}$ is the solution of mean curvature flow, then, by the weak maximum principle of it (See also \cite{Ecker}), we can see that if the initial manifold $M_0$ is 
in an open ball $B(0,r),$ where $r>0,$ then $M_t \subset B(0, \sqrt{r^2 -2nt}),$ for any $t\in [0,\epsilon).$ 
Furthermore, other properties of the mean curvature flow in $\bR^N$ have been extensively studied.
For example, Wang investigates the mean curvature flow of graphs in \cite{Wang} and the author constructs explicit self-similar solutions and translating solitons for the mean curvature flow in $\bC ^n (=\bR^{2n})$ in his last article \cite{N}. 
Since the definition of the mean curvature vector is intricate as well as abstract, it is difficult or impossible to find the non-trivial and explicit solution of a given initial submanifold. 
However, we think of how explicit submanifolds of some concrete manifolds move by the mean curvature flow.   
The author noticed that if we consider the mean curvature flow of the hypersurfaces $\{s= constant\}$ in the Lagrangian submanifold of the form of \cite[Ansatz 3.1]{JLT}, they may change inside themselves. The prediction is correct with some restriction, to be showed in this paper. In this short article we deal with the mean curvature flow in the Lagrangian submanifold and give the explicit solutions. 
By the next section of this paper we get the following Theorem \ref{m} and we treat a self-similar solution for the mean curvature flow in there. 
Note that we can learn self-similar solutions for the mean curvature flow elementarily from \cite{JLT} and \cite{N}, and 
the self-expander, which means a self-similar solution of expanding, in the articles is an explicit product manifold $\mathcal {S}^{n-1} \times \bR$ in $\bC^n .$
Thus in the next steps, it is natural to study the mean curvature flow of the sphere $\mathcal{S}^{n-1}$ inside it and Theorem \ref{m}. 
\begin{thm}\label{m}
Let $a>0,$ $E\geq1$ and $\alpha\geq 0$ be constants.
Define $r:[0,\infty) \to \bR$ by 
$r(s)=\sqrt{1/a+s^2}$ and $\phi_E:[0,\infty) \to \bR$ by 
$$\phi _E(s)=\int_0 ^s \frac{t\,dt}{(1/a+t^2)\sqrt{E(1+at^2)^n e^{\alpha t^2}-1}}.$$ 
Set $$l_s =\{(x_1 r(s)e^{i\,\phi_E (s)},\ldots ,x_n r(s)e^{i\,\phi_E (s)});\sum_{j=1}^n x_j ^2 =1,\,x_1,\ldots ,x_n \in \bR\},$$ for $s\in [0,\infty),$ and 
\begin{equation}\label{s}
L=\bigcup _{s\in [0,\infty)}l_s.
\end{equation}
$($Then, clearly, $l_s \subset L\subset \bC^n .)$
Fix $s_0 \in (0,\infty).$ Suppose that $f$ is the solution of the following initial-value problem
\begin{equation}
\left\{
\begin{aligned}
\frac{df}{dt}&=-(n-1)\cdot \frac{E(1+af^2)^n -e^{-\alpha f^2}}{Es(1+af^2)^n}\\
 f(0)&=s_0
\end{aligned}
\right.
\end{equation}
Then $\{\l_{f(t)}\}_t $ is a mean curvature flow in $L.$ In addition, when $E=1,$ then $L$ is the Lagrangian self-expander 
in \cite[Theorem C]{JLT} and the domain of definition of $f$ can be extended to $[0,\infty ),$ $\lim _{t\to \infty}f(t) =0$ and $l_0 $ is a minimal hypersurface in $L.$
\end{thm}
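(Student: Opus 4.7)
The plan is to introduce natural coordinates on $L$, identify the induced metric as a warped product, apply the standard formula for the mean curvature of a slice in a warped product, and then check that the ODE for $f$ is equivalent to the mean curvature flow equation along the slices.

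First I would parametrize $L$ by $(s,x) \in [0,\infty)\times S^{n-1}$ via $p(s,x) = r(s) e^{i\phi_E(s)}(x_1,\ldots,x_n)$. Differentiating gives
\[
\frac{\partial p}{\partial s} = \bigl(r'(s) + i\,r(s)\,\phi_E'(s)\bigr)\,e^{i\phi_E(s)}(x_1,\ldots,x_n),
\]
and using the real inner product on $\bC^n \cong \bR^{2n}$ together with $x \cdot v = 0$ for $v \in T_x S^{n-1}$, one checks that $\partial_s p$ is orthogonal to every sphere direction $r(s)e^{i\phi_E(s)}v$. Hence the induced metric on $L$ is the warped product $g_L = g(s)\,ds^2 + r(s)^2 g_{S^{n-1}}$ with $g(s) = r'(s)^2 + r(s)^2\phi_E'(s)^2$. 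Substituting the explicit $r$ and $\phi_E$ and using $1+as^2 = a\,r(s)^2$ simplifies this to
\[
g(s) = \frac{s^2}{r(s)^2}\cdot \frac{E(1+as^2)^n e^{\alpha s^2}}{E(1+as^2)^n e^{\alpha s^2}-1}.
\]

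Next, with unit normal $\nu = \partial_s/\sqrt{g(s)}$, the warped-product formula gives the mean curvature vector of $l_s$ in $L$ as $\vec H = -(n-1)\,r'(s)\,\nu/(r(s)\sqrt{g(s)})$. The mean curvature flow equation $dp/dt = \vec H$ then reduces, along $l_{f(t)}$, to
\[
\frac{df}{dt} = -(n-1)\,\frac{r'(f)}{r(f)\,g(f)},
\]
and inserting $r'(s) = s/r(s)$ with the simplified $g(s)$ collapses the right-hand side to the ODE of the statement (the variable in its denominator being $f(t)$).

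For the case $E=1$, I would compare $\gamma(s) = r(s)e^{i\phi_E(s)}$ with the curve defining the self-expander of \cite[Theorem C]{JLT}: the pair $(r,\phi_1)$ satisfies the ODE characterizing JLT's curve, so the resulting $L$ coincides with their Lagrangian self-expander. To see that $f$ extends to $[0,\infty)$ and $f(t)\to 0$, observe that the right-hand side $F(f) = (n-1)[(1+af^2)^n - e^{-\alpha f^2}]/[f(1+af^2)^n]$ is smooth and strictly positive for $f>0$, with Taylor expansion $F(f) = (n-1)(na+\alpha)\,f + O(f^3)$ at the origin. Hence $f$ is monotonically decreasing, cannot reach $0$ in finite time by uniqueness, and decays exponentially as $t\to\infty$. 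Finally, since $r'(0) = 0$, the mean curvature formula gives $\vec H \equiv 0$ on $l_0$, so $l_0$ is minimal in $L$.

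The main obstacle will be the warped-product metric computation, in particular verifying the orthogonality of $\partial_s$ with the sphere directions and carrying out the algebraic simplification that turns $r'(s)/(r(s)g(s))$ into the clean quotient appearing in the statement. Matching $L$ (for $E=1$) to the JLT self-expander is essentially a comparison of two ODE descriptions but requires some care with conventions.
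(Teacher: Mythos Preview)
Your proposal is correct but follows a different route from the paper. The paper computes the mean curvature of $l_s$ in $L$ \emph{extrinsically}: it first observes that $l_s$, being a round sphere of radius $|w(s)|$ in $\bC^n$, has ambient mean curvature $\bar H(p)=-\frac{n-1}{|w(s)|^2}\,p$ (Lemma~\ref{S}), and then uses the general fact that the mean curvature of $l_s$ inside $L$ is the orthogonal projection of $\bar H$ onto $T_pL$ (Proposition~\ref{prop}); since both $p$ and $\partial_s$ are orthogonal to $T_p l_s$, this projection is just $\frac{\langle p,\partial_s\rangle}{|\partial_s|^2}\,\partial_s$, yielding $H_s=-\frac{(n-1)\,\rRe(\bar w\dot w)}{|w|^2|\dot w|^2}\,\partial_s$ directly. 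Your approach is \emph{intrinsic}: you compute the induced metric on $L$ as the warped product $g(s)\,ds^2+r(s)^2 g_{S^{n-1}}$ and invoke the standard slice formula. The two agree, since $\frac{\rRe(\bar w\dot w)}{|w|^2|\dot w|^2}=\frac{r\dot r}{r^2 g}=\frac{r'}{rg}$, and the subsequent algebraic simplification to the ODE of the statement is identical to the paper's computation in Remark~\ref{r}. The paper's method avoids ever computing $g(s)$ and highlights the nested structure $l_s\subset L\subset\bC^n$; your method is self-contained within $L$ once the metric is known and delivers the Riemannian picture of $L$ as a byproduct. Your treatment of the $E=1$ long-time behaviour (Taylor expansion $F(f)\sim (n-1)(na+\alpha)f$, monotone decrease, uniqueness at the equilibrium $f=0$) is in fact more explicit than what the paper records.
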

\section{\rm{Results and Proofs}}
In order to discuss the mean curvature flow in submanifolds, firstly, we consider the following well known Proposition. 
Note that, in this article, when a manifold $M$ is a submanifold in a riemannian manifold $N,$ then we denote $A_{M,N}$ the second fundamental form of $M$ in $N$ and $\nabla^{N}, \nabla^{M}$ the Levi-Civita connections on $N$ and $M$ respectively. Hence $A_{M,N}\in C^{\infty}(M,(TN/TM)\otimes T^* M \otimes T^* M ).$
\begin{proposition}\label{prop}
Let $l,$ $L$ be submanifolds in $\bC^n .$ Suppose that $l$ is a submanifold in $L.$
Put $H$ to be the mean curvature vector of $l$ in $L,$ and $\bar{H}$ to be the mean curvature vector of $l$ in $\bC^n.$ Fix $p\in l .$ Then $$H(p)=\bar{H}(p)-\sum_{j} A_{L,\bC^n} (e_j ,e_j)$$where $\{e_j\}_j$ is an orthonormal basis of $T_p l.$ Hence we can see that $$H(p)=\pi _{T_p L}(\bar{H}(p)),$$ where $\pi_{T_p L}(\bar{H}(p))$ is the orthogonal projection of $\bar{H}(p)$ to $T_p L.$
\end{proposition}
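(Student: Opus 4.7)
The plan is to derive the identity by applying the Gauss formula for the inclusion $L\subset\bC^n$ to tangent vectors of $l$, then to sum and project appropriately.

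First I would fix $p\in l$ and an orthonormal frame $\{e_j\}$ of $T_p l$. Since $l\subset L$, each $e_j$ is also tangent to $L$, so the Gauss formula for $L$ in $\bC^n$ applies:
\begin{equation*}
\nabla^{\bC^n}_{e_j} e_j \;=\; \nabla^{L}_{e_j} e_j \;+\; A_{L,\bC^n}(e_j,e_j),
\end{equation*}
where the first summand lies in $T_p L$ and the second is normal to $T_p L$ in $\bC^n$. I would sum over $j$ and take the component orthogonal to $T_p l$ (inside $T_p\bC^n$) on both sides. On the left this yields, by definition, $\bar H(p)$. On the right, $A_{L,\bC^n}(e_j,e_j)$ is already perpendicular to $T_p L$, hence to $T_p l$, so it is unchanged; and $\nabla^{L}_{e_j} e_j$, being a vector in $T_p L$, has the same orthogonal projection to $(T_p l)^\perp$ whether we take the complement inside $T_p L$ or inside $T_p\bC^n$. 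Thus its projection is exactly the mean curvature vector $H(p)$ of $l$ in $L$. Combining gives
\begin{equation*}
\bar H(p) \;=\; H(p) \;+\; \sum_{j} A_{L,\bC^n}(e_j,e_j),
\end{equation*}
which rearranges to the first claimed identity.

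For the second identity, I would observe that the displayed decomposition of $\bar H(p)$ is actually an orthogonal decomposition with respect to $T_p L\subset T_p\bC^n$: the term $H(p)$ lies in $T_p L$ (indeed in the orthogonal complement of $T_p l$ inside $T_p L$), while $\sum_j A_{L,\bC^n}(e_j,e_j)$ lies in the $\bC^n$-normal bundle of $L$ at $p$. Therefore $H(p)$ coincides with the orthogonal projection $\pi_{T_p L}(\bar H(p))$.

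There is no real obstacle here beyond bookkeeping: the only point requiring care is to track which ambient space the orthogonal projections are taken in, and in particular to verify that projecting $\nabla^L_{e_j}e_j$ orthogonally to $T_p l$ gives the same result in $T_p L$ as in $T_p\bC^n$, which is immediate from $\nabla^L_{e_j}e_j\in T_p L$.
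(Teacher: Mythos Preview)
Your proof is correct and follows essentially the same route as the paper: both apply the Gauss formula $\nabla^{\bC^n}_{e_j}e_j=\nabla^{L}_{e_j}e_j+A_{L,\bC^n}(e_j,e_j)$ and then isolate the part normal to $T_p l$. The paper does this by subtracting $\nabla^{l}_{e_j}e_j$ and recognizing $A_{l,\bC^n}$ and $A_{l,L}$, while you do it by projecting orthogonally to $(T_p l)^\perp$; these are equivalent, and your extra remark deriving $H(p)=\pi_{T_pL}(\bar H(p))$ from the orthogonal splitting $T_p L\oplus (T_pL)^\perp$ is exactly the observation the paper leaves implicit.
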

The reader can try to prove Proposition \ref{prop} or skip the proof below if it is already done. 
\begin{proof}
From the definitions of the mean curvature vector and the second fundamental form we have 
\begin{equation*}
\begin{split}
H(p)=&\sum_j A_{l,L}(e_j ,e_j)
  =\sum_j(\nabla_{e_j}^L e_j -\nabla_{e_j}^l e_j)
  =\sum_j(\nabla_{e_j}^{\bC^n}e_j -A_{L,\bC^n}(e_j ,e_j)-\nabla_{e_j}^l e_j)\\
  =&\sum_j(A_{l,\bC^n}(e_j ,e_j)-A_{L,\bC^n}(e_j ,e_j))
  =\bar{H}(p)-\sum_j A_{L,\bC^n} (e_j ,e_j). 
\end{split}
\end{equation*}
This finishes the proof.
\end{proof}
In the following Theorem \ref{k}, from a direct calculation, the submanifolds $L$ are Lagrangian submanifold.
\begin{thm}\label{k}
Let $I$ be an interval of $\bR $ and $w :I\to \bC \setminus \{0\}$ be a smooth function. Suppose that $\dot{w}(s)\neq 0,$ for any $s\in I.$
Define submanifolds $l_s ,$ for $s\in I,$ in $\bC^n$ by $$l_s =\{(x_1 w(s),\ldots ,x_n w(s));\sum_{j=1}^n x_j ^2 =1,\,x_1,\ldots ,x_n \in \bR\},$$ and submanifold L in $\bC^n$
by $$L=\bigcup _{s\in I}l_s.$$
$($Clearly, $l_s \subset L\subset \bC^n .)$
Let $H_{s}$ be the mean curvature vector of $l_{s}$ in $L.$
Then 
\begin{equation}\label{h}
H_{s}(x_1 w(s),\ldots ,x_n w(s))=-\frac{(n-1)\rRe \,(\bar{w}(s)\dot{w}(s))}{|w(s)|^2|\dot{w}(s)|^2}\cdot\frac{\partial }{\partial s}
\end{equation}
holds, where $\partial /\partial s =(x_1 \dot{w}(s),\ldots ,x_n \dot{w}(s))\in T_{(x_1 w(s),\ldots ,x_n w(s))}L.$
Thus, by the definition of the mean curvature flow, if we suppose that $f$ is a solution of the following ordinal differential equation $$\frac{df (t)}{dt}= -\frac{(n-1)\rRe \,(\bar{w}(f(t))\dot{w}(f(t)))}{|w(f(t))|^2|\dot{w}(f(t))|^2},$$ then 
$\{l_{f(t)}\} _t$ is a mean curvature flow in $L.$
\end{thm}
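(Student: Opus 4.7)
The plan is to invoke Proposition \ref{prop}: at each point $p\in l_s$, the mean curvature vector $H_s(p)$ of $l_s$ in $L$ equals the orthogonal projection onto $T_pL$ of the mean curvature vector $\bar H_s(p)$ of $l_s$ in $\bC^n$. So the work splits into two tasks: compute $\bar H_s(p)$, and then project it onto $T_pL$.

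For the first task, I would observe that $l_s$ is the image of the standard unit sphere in $\bR^n$ under the $\bR$-linear map $(x_1,\ldots,x_n)\mapsto(x_1 w(s),\ldots,x_n w(s))$, which is $|w(s)|$ times a linear isometry onto the real $n$-dimensional subspace $V_s:=\{(x_1 w(s),\ldots,x_n w(s)):x\in\bR^n\}$ of $\bC^n$. Hence $l_s$ is a standard round $(n-1)$-sphere of radius $|w(s)|$ sitting inside $V_s$, and its mean curvature vector in $\bC^n$ is the usual one,
\begin{equation*}
\bar H_s(p)=-\frac{n-1}{|w(s)|^2}\bigl(x_1 w(s),\ldots,x_n w(s)\bigr).
\end{equation*}

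For the projection, I would use throughout the real inner-product identity $\langle\alpha w(s),\beta\dot w(s)\rangle_{\bR^{2n}}=\alpha\beta\,\rRe(\bar w(s)\dot w(s))$ for real scalars $\alpha,\beta$ (and the analogous formulas with $w,w$ and $\dot w,\dot w$). A tangent vector to $l_s$ at $p$ has the form $(v_1 w(s),\ldots,v_n w(s))$ with $\sum x_j v_j=0$. Pairing against $\partial/\partial s=(x_1\dot w(s),\ldots,x_n\dot w(s))$ gives $(\sum x_j v_j)\,\rRe(\bar w(s)\dot w(s))=0$, so $\partial/\partial s\perp T_p l_s$ and the decomposition $T_pL=T_p l_s\oplus\bR\,\partial/\partial s$ is orthogonal. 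The same identity shows $\bar H_s(p)\perp T_p l_s$, since $\bar H_s(p)$ is a real multiple of $(x_1 w(s),\ldots,x_n w(s))$ and $\sum x_j v_j=0$. Hence the projection collapses to its $\partial/\partial s$-component. Computing $\langle\bar H_s(p),\partial/\partial s\rangle=-(n-1)|w(s)|^{-2}\,\rRe(\bar w(s)\dot w(s))\sum x_j^2=-(n-1)\,\rRe(\bar w(s)\dot w(s))|w(s)|^{-2}$ and $|\partial/\partial s|^2=|\dot w(s)|^2$ yields formula (\ref{h}) exactly.

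The ODE statement is then immediate: if $f$ satisfies the prescribed equation, then the velocity of the curve $t\mapsto l_{f(t)}$ is $f'(t)\,\partial/\partial s$, which is normal to $l_{f(t)}$ by the orthogonality established above and equals $H_{f(t)}$ by formula (\ref{h}); that is, $\{l_{f(t)}\}_t$ is a mean curvature flow in $L$. I do not foresee a serious obstacle here; the only care needed is to carry the real-versus-complex inner products through consistently, and to notice at the outset that $\partial/\partial s\perp T_p l_s$, which is exactly what makes the projection onto $T_pL$ reduce to a single scalar coefficient.
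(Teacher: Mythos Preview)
Your proposal is correct and follows essentially the same route as the paper: the paper also invokes Proposition~\ref{prop}, computes $\bar H_s(p)=-(n-1)|w(s)|^{-2}p$ (via Lemma~\ref{S}, which is exactly your round-sphere observation), notes that $\partial/\partial s\perp T_p l_s$, and then reduces the projection onto $T_pL$ to the single $\partial/\partial s$-coefficient. The only cosmetic difference is that you spell out the inner-product identities explicitly where the paper says ``from a direct calculation.''
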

We notice that the following Lemma \ref{S} holds and it is a lemma of Theorem \ref{k}.  
\begin{lemma}\label{S}
Let $\alpha \in \bC \setminus \{0\}$ be a constant. Define a submanifold $S$ in $\bC ^n$ by 
$$S=\{\alpha(x_1 ,\ldots ,x_n)\in \bC^n;\sum_{j=1}^n x_j ^2=1,\,x_1,\ldots ,x_n \in \bR \}.$$ Fix $p\in S.$
Then $$H(p)=-\frac{n-1}{|\alpha|^2}p,$$
where $H(p)$ is the mean curvature vector of $S$ at $p.$ 
\end{lemma}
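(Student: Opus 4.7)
The plan is to reduce the computation to the classical formula for the mean curvature vector of a round Euclidean sphere, by noting that $S$ sits inside a totally geodesic real linear subspace of $\bC^n$.

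First I would write $V=\{\alpha(x_1,\ldots,x_n):x_1,\ldots,x_n\in\bR\}$, so that $S\subset V\subset \bC^n$ and $V$ is an $\bR$-linear $n$-dimensional subspace. The map $\phi:\bR^n\to\bC^n$, $\phi(x)=\alpha x$, is $\bR$-linear and satisfies $\langle\phi(x),\phi(y)\rangle_{\bC^n}=|\alpha|^2\langle x,y\rangle_{\bR^n}$, so it is a homothety of ratio $|\alpha|$ from Euclidean $\bR^n$ onto $V$. Since $\phi$ sends the unit sphere $\mathcal{S}^{n-1}\subset\bR^n$ onto $S$, the submanifold $S$ is, with its induced metric, the round sphere of radius $|\alpha|$ inside the Euclidean space $V$.

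Next I would exploit that $V$ is a linear subspace of $\bC^n$, hence totally geodesic: $A_{V,\bC^n}\equiv 0$. Applying Proposition \ref{prop} with $l=S$ and $L=V$ gives
$$H(p)=\bar H(p)-\sum_j A_{V,\bC^n}(e_j,e_j)=\bar H(p),$$
so the mean curvature vector of $S$ computed in $\bC^n$ coincides with the one computed inside $V$. Equivalently, for any orthonormal frame $\{e_j\}$ of $T_pS$ the ambient connection $\nabla^{\bC^n}_{e_j}e_j$ is already tangent to $V$, so its projection to the normal bundle of $S$ in $\bC^n$ lies in $V$.

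Finally I would invoke the elementary fact that the mean curvature vector of an $(n-1)$-sphere of radius $r$ in Euclidean $n$-space at a point $q$ with $|q|=r$ equals $-(n-1)q/r^{2}$. Taking $r=|\alpha|$ and $q=p=\alpha(x_1,\ldots,x_n)$ yields exactly $H(p)=-(n-1)p/|\alpha|^2$. The only subtle step is the totally-geodesic observation in the middle paragraph; once that is in place the rest is routine, and no direct computation of the second fundamental form of $S$ in $\bC^n$ is needed.
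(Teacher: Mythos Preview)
Your argument is correct. It differs from the paper's proof in its organization: rather than computing directly, you first observe that $S$ lies in the real $n$-plane $V=\alpha\,\bR^n\subset\bC^n$, use that $V$ is totally geodesic (so Proposition~\ref{prop} with $l=S$, $L=V$ identifies the mean curvature of $S$ in $\bC^n$ with that in $V$), and then quote the standard formula for the round $(n-1)$-sphere of radius $|\alpha|$ in Euclidean $n$-space. The paper instead works directly in $\bC^n$: for each vector $e_j$ of an orthonormal basis of $T_pS$ it intersects $S$ with the $2$-plane spanned by $e_j$ and $\overrightarrow{Op}$, obtains a circle of radius $|\alpha|$, parametrizes it by a curve $c_j$ with $c_j(0)=p$, $\dot c_j(0)=e_j$, $\ddot c_j(0)=-p/|\alpha|^2$, and sums the normal components. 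Your route is cleaner and more conceptual, while the paper's is more hands-on and avoids invoking Proposition~\ref{prop} (or the sphere formula) as a black box; both arrive at the same place with no real difference in depth. One small remark: be careful with the overloading of the symbol $H$---in your application of Proposition~\ref{prop} the quantity called $H$ there is the mean curvature of $S$ in $V$, whereas the $H$ in the lemma's statement is the mean curvature in $\bC^n$; you have the logic right, but it would read more smoothly if you named them differently.
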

\begin{proof}
Let $\{e_1 ,\ldots ,e_{n-1}\}$ be an orthonormal basis of $T_p S.$ 
Let $V_j$ be the plane which is generated by $e_j$ and $\overrightarrow{Op},$ where $O=(0,\ldots ,0)\in\bC^n.$ Since the intersection of $S$ and $V_j$ is a circle of radius $|\alpha|$ with center $O,$
we can get curves $c_1 ,\ldots ,c_{n-1}:\bR\to S$ such that 
$$c_j (0)=p,\quad \dot{c}_j (0)=e_j,\quad \ddot{c}_j (0)=-\frac{1}{|\alpha|^2}p,$$ for any $j.$ 
We compute 
\begin{equation*}
H(p)=\sum_{j=1}^{n-1}A_{S,\bC^n}(e_j ,e_j)
  =\sum_{j=1}^{n-1}\left(\nabla _{e_j}^{\bC^n}e_j \right)^{\perp}
  =\sum_{j=1}^{n-1}(\ddot{c}_j (0))^{\perp}
  =\sum_{j=1}^{n-1}\left(-\frac{1}{|\alpha|^2}p\right)^{\perp}
  =-\frac{n-1}{|\alpha|^2}p,
\end{equation*}
where $\perp$ is the orthogonal projection to $T_p ^{\perp} S.$ This completes the proof.
\end{proof}
Now we prove Theorem \ref{k}.

{\it proof of Theorem \ref{k}.}
We denote by $\bar{H}_s$ the mean curvature vector of $l_s$ in $\bC^2.$ Fix $p=(x_1 w(s),\ldots ,x_n w(s))\in l_s.$ By Lemma \ref{S}, 
$$\bar{H}_s (p)=-\frac{n-1}{|w (s)|^2}(p).$$
By Proposition \ref{prop}, we have 
\begin{equation*}
H(p)= \pi _{T_p L}(\bar{H}(p))
=-\frac{n-1}{|w (s)|^2}\cdot \pi _{T_p L}(p)
\end{equation*}
From a direct calculation, we can see $\partial /\partial s \perp T_p l_s.$ Hence we obtain
\begin{equation*}
H(p)=-\frac{n-1}{|w (s)|^2}\cdot \frac{p \cdot \partial /\partial s}{\partial /\partial s \cdot \partial /\partial s}\cdot \frac{\partial}{\partial s}
=-\frac{(n-1)\rRe \,(\bar{w}(s)\dot{w}(s))}{|w(s)|^2|\dot{w}(s)|^2}\cdot\frac{\partial }{\partial s}  .
\end{equation*}
This finishes the proof.
\qed

Next we consider the following Remark \ref{r} and Figure 1. If we put $w_1 = \cdots =w_1$ in the construction of the Lagrangian self-expander given by Joyce, Lee and Tsui \cite[Thorem C]{JLT}, then we can find a minimal hypersurface in the self-expander. 
\begin{figure}[!ht]
\begin{center}
\includegraphics[keepaspectratio, scale=0.4]
{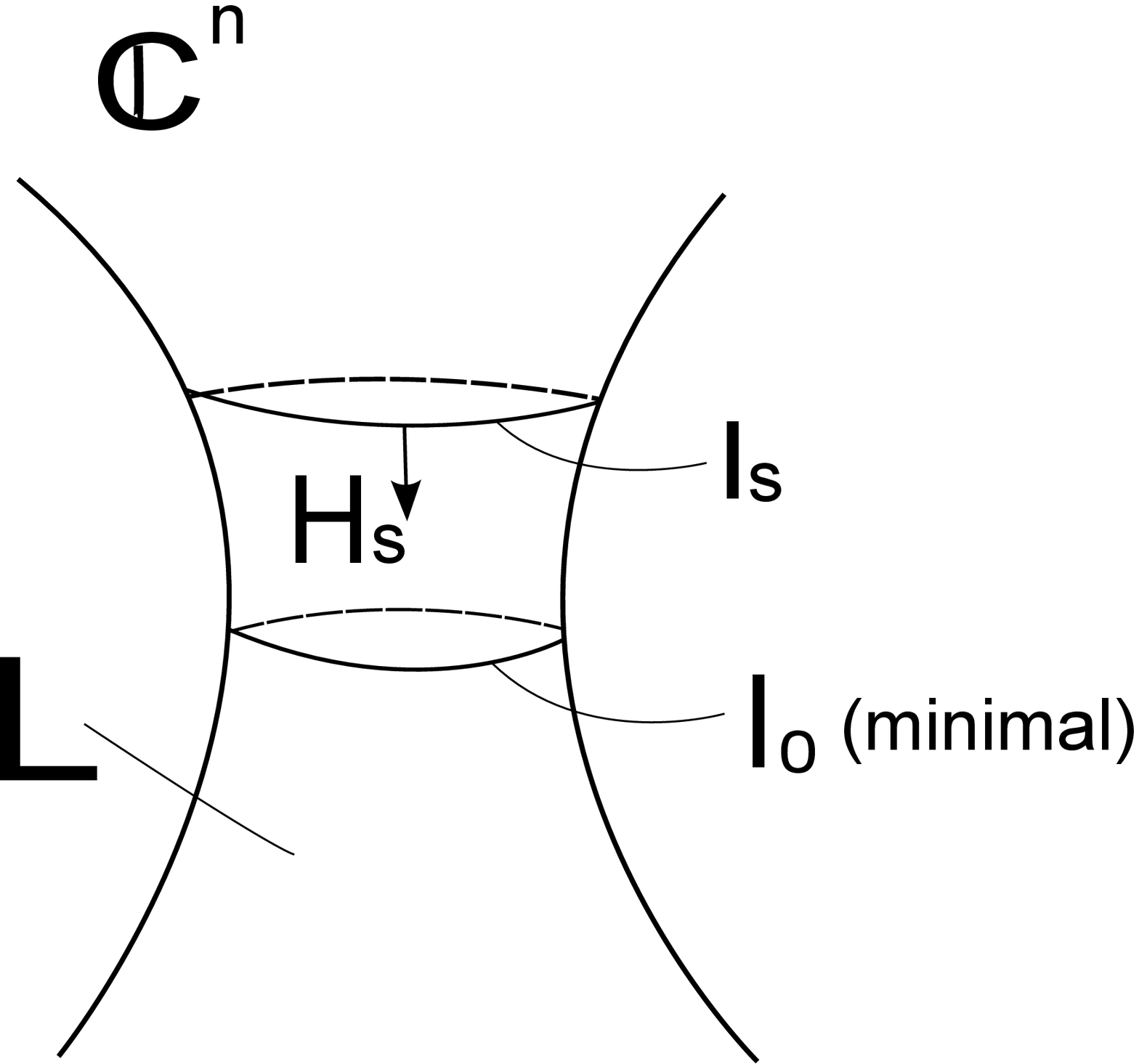}
\caption{Remark \ref{r}}
\label{ddd}
\end{center}
\end{figure}
\begin{rem}\label{r}
Let $a>0$ and $\alpha\geq 0$ be constants.
Define $r:\bR \to \bR$ by 
$r(s)=\sqrt{1/a+s^2}$ and $\phi:\bR \to \bR$ by 
$$\phi(s)=\int_0 ^s \frac{|t|dt}{(1/a+t^2)\sqrt{(1+at^2)^n e^{\alpha t^2}-1}}.$$
In the situation of Theorem \ref{k}, if we put $I=\bR$ and $w(s)=r(s)e^{i\phi(s)},$ then $L$ is the Lagrangian self-expander constructed in \cite[Theorem C]{JLT}. Then we compute 
\begin{equation}\label{r}
\begin{split}
\frac{\rRe \,(\bar{w}(s)\dot{w}(s))}{|w(s)|^2|\dot{w}(s)|^2}=&\frac{\rRe \,\left(r(s)e^{-i\phi (s)}(\dot{r}(s)e^{i\phi(s)}+ir\dot{\phi}(s)e^{i\phi(s)})\right)}{r(s)^2\cdot |\dot{r}(s)e^{i\phi (s)}+ir(s)\dot{\phi}(s)e^{i\phi (s)}|^2}\\
=&\frac{r(s) \dot{r}(s)}{r(s)^2\cdot |\dot{r}(s)+ir(s)\dot{\phi}(s)|^2}\\
=&\frac{r(s) \dot{r}(s)}{r(s)^2 \dot{r}(s)^2+r(s)^4 \dot{\phi}(s)^2}\\
=&\frac{s}{s^2+s^2 /((1+at^2)^n e^{\alpha s^2}-1)}\\
=&\frac{1}{s+s /((1+at^2)^n e^{\alpha s^2}-1)}\\
=&\frac{1}{s(1+as^2)^n e^{\alpha s^2}/((1+as^2)^n e^{\alpha s^2}-1)}\\
=&\frac{(1+as^2)^n e^{\alpha s^2}-1}{s(1+as^2)^n e^{\alpha s^2}}\\
=&\frac{(1+as^2)^n -e^{-\alpha s^2}}{s(1+as^2)^n}.\\
\end{split}
\end{equation}
By the equation \eqref{h}, the last computation and L'H$\hat{\rm{o}}$pital's rule, we obtain
\begin{equation}\label{ee}
\begin{split}
H_0 (x_1 w(0),\ldots ,x_n w(0))=&-(n-1)\cdot \frac{\rRe \,(\bar{w}(0)\dot{w}(0))}{|w(0)|^2|\dot{w}(0)|^2}\cdot \frac{\partial }{\partial s}\\
=&-(n-1)\cdot \lim_{s\to 0}\frac{(1+as^2)^n -e^{-\alpha s^2}}{s(1+as^2)^n}\cdot \frac{\partial }{\partial s}\\
=&-(n-1)\cdot \lim_{s\to 0}\frac{n(1+as^2)^{n-1}\cdot 2as+2\alpha s e^{-\alpha s^2}}{(1+as^2)^n +s\cdot n(1+as^2)^{n-1}\cdot 2as}\cdot \frac{\partial}{\partial s}\\
=&\vec{0}.
\end{split}
\end{equation}
Therefore, in this case, $l_0$ is minimal in $L.$ Secondly, the author is going to prove a general version of the fact $l_0$ is volume-minimizing as well as minimal in his next paper. See also it. A bit of information is below. 
We can see $\pi ^*({\rm vol} _{l_0})$ is a calibration of $L$ that is described in the section 4 of \cite{J} and that is a little difficult to prove, and $l_0$ is the calibrated submanifold, where $\pi $ is a projection from $L$ to $l_0$ defined by 
$$\pi (x_1 w_1(s),\ldots ,x_n w_n (s))=(x_1 w_1(0),\ldots ,x_n w_n (0))$$ 
and 
${\rm vol} _{l_0}$ is the volume form of $l_0$ with respect to the induced metric of the Euclidean metric in $\bC^n .$ Note that $\pi$ is well-defined. 
This implies that $l_0$ is volume-minimizing as well as minimal. 
In addition, without the restriction $w_1 =\cdots = w_n ,$the submanifold $\{y=0\}$ is a calibrated manifold in the self-expander $L$ \cite[Thorem C]{JLT}.
\end{rem}
Further we have to research the following situation. 
\begin{rem}\label{r2}
Let $a>0,$ $E>1$ and $\alpha\geq 0$ be constants.
Define $r:(0,\infty) \to \bR$ by 
$r(s)=\sqrt{1/a+s^2}$ and $\phi_E:(0,\infty) \to \bR$ by 
$$\phi _E(s)=\int_0 ^s \frac{t\,dt}{(1/a+t^2)\sqrt{E(1+at^2)^n e^{\alpha t^2}-1}}.$$
In the situation of Theorem \ref{k}, if we put $I=(0,\infty)$ and $w(s)=r(s)e^{i\phi_E (s)},$ then $L$ is the Lagrangian self-similar solution constructed in \cite[Theorem 1.3]{N}. Now we write 
\begin{equation}\label{t}
\begin{split}
\frac{\partial}{\partial s}=&(x_1\dot{w} \ldots ,x_n \dot{w})\\
=&\dot{w}\cdot (x_1, \ldots ,x_n)\\
=&(\dot{r}e^{i\phi_E} +r\,i\dot{\phi}_E e^{i\phi _E})\cdot (x_1, \ldots ,x_n)\\
=&(\dot{r} +r\,i\dot{\phi}_E )\cdot e^{i\phi _E} \cdot (x_1, \ldots ,x_n).\\
\end{split}
\end{equation}
From \eqref{r} and \eqref{t}, we obtain 

\begin{equation*}
\begin{split}
&H_s (x_1 w(s),\ldots ,x_n w(s))
=-(n-1)\frac{\rRe \,(\bar{w}(s)\dot{w}(s))}{|w(s)|^2|\dot{w}(s)|^2}\cdot \frac{\partial}{\partial s} \\
=&\frac{-(n-1)r\dot{r}}{r^2\cdot |\dot{r}+ir\dot{\phi_E}|^2}\cdot (\dot{r} +r\,i\dot{\phi}_E )\cdot e^{i\phi _E} \cdot (x_1, \ldots ,x_n)\\
=&\frac{-(n-1)r\dot{r}}{r^2\cdot( \dot{r}-ir\dot{\phi_E})}\cdot e^{i\phi _E} \cdot (x_1, \ldots ,x_n)\\
=&\frac{-(n-1)s}{1/a +s^2}\cdot \left(\frac{s}{\sqrt{1/a +s^2}}-i \frac{s}{\sqrt{(1/a +s^2)\{E(1+a  s^2)^n  e^{\alpha s^2}-1\}}} \right)^{-1}\cdot e^{i\phi _E} \cdot (x_1, \ldots ,x_n)\\
=&\frac{-(n-1)}{1/a +s^2}\cdot \left(\frac{1}{\sqrt{1/a +s^2}}-i \frac{1}{\sqrt{(1/a +s^2)\{E(1+a  s^2)^n  e^{\alpha s^2}-1\}}} \right)^{-1}\cdot e^{i\phi _E} \cdot (x_1, \ldots ,x_n).\\
\end{split}
\end{equation*}
Therefore 
\begin{equation}\label{y}
\begin{split}
\lim_{s\to +0} H_{s}(x_1 w(s),\ldots ,x_n w(s))=& -(n-1)a\cdot \left(\sqrt{a}-i \sqrt{\frac{a}{E  -1}} \right)^{-1}\cdot (x_1, \ldots ,x_n)\\
=&-(n-1)a\cdot \frac{\sqrt{a}+i \sqrt{a/(E  -1)} }{a+a/(E-1)}\cdot (x_1, \ldots ,x_n)\\
=&-(n-1)\cdot \frac{\sqrt{a}+i \sqrt{a/(E  -1)} }{1+1/(E-1)}\cdot (x_1, \ldots ,x_n)\\
=&-(n-1) \sqrt{a}\cdot \,\,\frac{1+i \sqrt{1/(E  -1)} }{1+1/(E-1)}\cdot (x_1, \ldots ,x_n)\\
=&-(n-1) \sqrt{a}\cdot \,\, \frac{E-1+i \sqrt{E  -1} }{E}\cdot (x_1, \ldots ,x_n).\\
\end{split}
\end{equation}
By \eqref{y}, we can check 
$$\lim_{E\to 1+0}\left(\lim_{s\to +0} H_{s}(x_1 w(s),\ldots ,x_n w(s))\right)=\vec{0}$$
which matches \eqref{ee}.
\end{rem}
\section{\rm{Discussion}}
We can also consider the mean curvature flow in product manifolds, for example, cone manifolds, the paraboloid of revolution without the bottom point and so on, similarly to this paper and can obtain the solutions.   


\begin{thebibliography}{99}
\bibitem{Ecker}
\textsc{K. Ecker},
Regularity theory for mean curvature flow, Birkh${\rm \ddot{a}}$user.

\bibitem{J}
\textsc{D. Joyce},
Riemannian holonomy groups and calibrated geometry, Oxford Graduate Texts in Mathematics 12.

\bibitem{JLT}
\textsc{D. Joyce, Y.-I. Lee and M.-P. Tsui},
Self-similar solutions and translating solitons for
Lagrangian mean curvature flow, J. Differential Geom. 84 (2010), 127--161.

\bibitem{La}
\textsc{G. Lawlor}, 
The angle criterion, Inventiones math. 95 (1989), 437--446.

\bibitem{N}
\textsc{H. Nakahara},
Some examples of self-similar solutions and translating solitons for Lagrangian mean curvature flow, Tohoku Mathematical Journal, Vol. 65, No. 3.

\bibitem{Wang}
\textsc{M. -T. Wang},
Long-time existence and convergence of graphic mean curvature flow
in arbitrary codimension, Invent. Math. 148 (2002) 3, 525--543.


\end{thebibliography}
\end{document}